\def\ak{a_k^{}} \def\bk{b_k^{}} \def\ck{c_k^{}}
\def\akt{\tilde a_k^{}} \def\bkt{\tilde b_k^{}} \def\ckt{\tilde c_k^{}}
\def\cmk{c_{-k}^{}} 
\def\tk{t_k^{}} \def\fk{f_k^{}}
\def\tj{t_j^{}} \def\fj{f_j^{}}
\def\pipi{[\kern .5pt 0,2\pi]}
\def\ones{[-1,1]}
\def\fn{f_n^{}}
\def\pn{p_n^{}}
\def\pns{p_n^*}
\def\Pn{P_n^{}}
\def\IN{I_N^{}}
\def\topspace{\vskip .06in}
\def\bottomspace{\vskip .06in}
\title{Extension of Chebfun to periodic functions}
\author{Grady B. Wright\thanks{Dept. of Mathematics, Boise State University,
Boise, ID 83725-1555, USA}\and
Mohsin Javed\and Hadrien Montanelli\and 
Lloyd N. Trefethen\thanks{Oxford University
Mathematical Institute, Oxford OX2 6GG,
UK.\ \ Supported by the European Research Council
under the European Union's Seventh Framework Programme
(FP7/2007--2013)/ERC grant agreement no.\ 291068.\ \ The views expressed
in this article are not those of the ERC or the European Commission, and the European
Union is not liable for any use that may be made of the information
contained here.}}
\begin{document}

\maketitle

\begin{abstract}
Algorithms and underlying mathematics are presented
for numerical computation with periodic
functions via approximations to machine precision by
trigonometric polynomials, including the solution of
linear and nonlinear periodic ordinary differential equations.
Differences from the nonperiodic Chebyshev case are highlighted.
\end{abstract}

\begin{keywords}
Chebfun, Fourier series, trigonometric interpolation, barycentric
formula
\end{keywords}

\begin{AMS}
42A10, 42A15, 65T40
\end{AMS}

\pagestyle{myheadings}
\thispagestyle{plain}

\markboth{WRIGHT, JAVED, MONTANELLI AND TREFETHEN}
{EXTENSION OF CHEBFUN TO PERIODIC PROBLEMS}

\section{Introduction}
It is well known that trigonometric representations of periodic
functions and Chebyshev polynomial representations of nonperiodic functions
are closely related.  Table~\ref{parallels} lists some of the
parallels between these two situations.  Chebfun, a software
system for computing with functions and solving ordinary
differential equations~\cite{battlestref,chebbook,cacm},
relied entirely on Chebyshev representations in its
first decade.  This paper describes
its extension to periodic problems initiated by
the first author and released with Chebfun Version 5.1
in December 2014.

\begin{table}[h]
\label{parallels}
\caption{Some parallels between trigonometric and Chebyshev
settings.  The row of contributors' names is just a sample
of some key figures.}
\begin{center}
\begin{tabular}{c|c}
{\bf Trigonometric} & {\bf Chebyshev} \\ \hline
\vrule height 14 pt width 0 pt $t\in \pipi$ & $x\in \ones$ \\
periodic & nonperiodic \\
$\exp(ikt)$ & $T_k(x)$ \\
trigonometric polynomials & algebraic polynomials \\
equispaced points & Chebyshev points \\
trapezoidal rule & Clenshaw--Curtis quadrature \\
companion matrix & colleague matrix \\
Horner's rule & Clenshaw recurrence \\
Fast Fourier Transform & Fast Cosine Transform \\
Gauss, Fourier, Zygmund$,\dots$ & Bernstein, Lanczos, Clenshaw$,\dots$ \\
\end{tabular}
\end{center}
\end{table}

Though Chebfun is a software product, the main focus of this
paper is mathematics and algorithms rather than software
{\em per se.} What makes this subject interesting is that
the trigonometric/Chebyshev parallel, though close, is not
an identity.  The experience of building a software system
based first on one kind of representation and then extending
it to the other has given the Chebfun team a uniquely intimate
view of the details of these relationships.  We begin this
paper by listing ten differences between Chebyshev
and trigonometric formulations that we have found important.
This will set the stage for presentations of the problems of
trigonometric series, polynomials, and projections (Section 2),
trigonometric interpolants, aliasing, and barycentric formulas
(Section 3), approximation theory and quadrature (Section 4), and various
aspects of our algorithms (Sections 5--7).

{\em 1. One basis or two.} For working with polynomials on
$\ones$, the only basis functions one needs are the Chebyshev
polynomials $T_k(x)$.  For trigonometric polynomials on $\pipi$,
on the other hand, there are two equally good equivalent
choices: complex exponentials $\exp(ikt)$, or sines and
cosines $\sin(kt)$ and $\cos(kt)$. 
The former is mathematically
simpler; the latter is mathematically more elementary and
provides a framework for dealing with even and odd symmetries.
A fully useful software system for periodic functions needs
to offer both kinds of representation.

{\em 2. Complex coefficients.}
In the $\exp(ikt)$ representation, the expansion coefficients
of a real periodic function are complex.  Mathematically,
they satisfy certain symmetries, and a software system needs to
enforce these symmetries to avoid 
imaginary rounding errors.  Polynomial approximations
of real nonperiodic functions, by contrast, do not lead to
complex coefficients.

{\em 3. Even and odd numbers of parameters.}
A polynomial of degree $n$ is determined by $n+1$ parameters,
a number that may be odd or even.  A trigonometric
polynomial of degree $n$, by contrast, is determined by $2n+1$
parameters, always an odd number, as a consequence of the
$\exp(\pm inx)$ symmetry. For most purposes it is unnatural
to speak of trigonometric polynomials with an even number of
degrees of freedom.  Even numbers make sense, on the other hand,
in the special case of trigonometric polynomials defined by
interpolation at equispaced points, if one imposes the symmetry
condition that the interpolant of the $(-1)^j$ sawtooth should
be real, i.e., a cosine rather than a complex exponential.
Here distinct formulas are needed for the even and odd cases.

{\em 4. The effect of differentiation.}
Differentiation lowers the degree of an algebraic polynomial,
but it does not lower the degree of a trigonometric polynomial; indeed
it enhances the weight of its highest-degree components.

{\em 5. Uniform resolution across the interval.}
Trigonometric representations have uniform properties across
the interval of approximation,
but polynomials are nonuniform, with much greater
resolution power near the ends of $\ones$ than near the
middle~\cite[chap.~22]{atap}.

{\em 6. Periodicity and translation-invariance.}
The periodicity of trigonometric representations means
that a periodic chebfun constructed on $\pipi$, say, can be perfectly
well evaluated at $10\pi$ or $100\pi$; nonperiodic chebfuns have
no such global validity.  Thus, whereas 
interpolation and extrapolation are utterly different for 
polynomials, they are not so different in the trigonometric case.
A subtler consequence of translation
invariance is explained in the footnote on p.~\pageref{footn}.

{\em 7. Operations that break periodicity.}  
A function that is smooth and periodic may lose these properties
when restricted to a subinterval or subjected to 
operations like rounding or absolute value.
This elementary fact has the consequence that
a number of operations on periodic chebfuns require 
their conversion to nonperiodic form.

{\em 8. Good and bad bases.}
The functions $\exp(ikt)$ or $\sin(kt)$ and $\cos(kt)$
are well-behaved by any measure, and nobody would normally
think of using any other basis functions for representing
trigonometric functions.  For polynomials, however,
many people would reach for the basis of monomials $x^k$
before the Chebyshev polynomials $T_k(x)$.  Unfortunately,
the monomials are exponentially ill-conditioned on $\ones$:
a degree-$n$ polynomial of size $1$ on $\ones$ will typically
have coefficients of order $2^n$ when expanded in the basis
$1,x,\dots,x^n$.  Use of this basis will cause trouble in
almost any numerical calculation unless $n$ is very small.

{\em 9. Good and bad interpolation points.}
For interpolation of periodic functions, nobody would normally
think of using any interpolation points other than equispaced.
For interpolation of nonperiodic functions by polynomials,
however, equispaced points lead to exponentially ill-conditioned
interpolation problems~\cite{ptk,runge}.  The mathematically
appropriate choice is not obvious until one learns it: Chebyshev
points, quadratically clustered near $\pm 1$.

{\em 10. Familiarity.}
All the world knows and trusts Fourier analysis.
By contrast, experience with Chebyshev polynomials is often
the domain of experts, and it is not as widely appreciated that
numerical computations based on polynomials can be trusted.
Historically, points 8 and 9 of this list have led to
this mistrust.

The book {\em Approximation Theory and Approximation Practice}~\cite{atap}
summarizes the mathematics and algorithms of Chebyshev technology
for nonperiodic functions.  The present paper was written
with the goal in mind of compiling analogous information
in the trigonometric case.  In particular,
Section 2 corresponds to Chapter 3 of~\cite{atap}, Section 3 to
Chapters 2, 4, and 5, and Section 4 to Chapters 6, 7, 8, 10, and 19.

\section{Trigonometric series, polynomials, and projections}
Throughout this paper, we assume $f$ is
a Lipschitz continuous periodic function on 
$\pipi$.
Here and in all our statements about periodic
functions, the interval $\pipi$ should be understood
periodically: $t=0$ and $t=2\pi$ are identified,
and any smoothness assumptions apply across this point in the
same way as for $t\in (0,2\pi)$~\cite[chap.~1]{katznelson}.

It is known that $f$ has a unique trigonometric series,
absolutely and uniformly convergent, of the form
\begin{equation}
f(t) = \sum_{k=-\infty}^\infty \ck e^{ikt},
\label{series1}
\end{equation}
with Fourier coefficients
\begin{equation}
\ck = {1\over 2\pi} \int_0^{2\pi} f(t) e^{-ikt} dt.
\label{coeffs1}
\end{equation}
(All coefficients in our discussions are
in general complex, though in cases of certain symmetries
they will be purely real or imaginary.)
Equivalently, we have
\begin{equation}
f(t) = \sum_{k=0}^\infty \ak \cos(kt) + \sum_{k=1}^\infty \bk \sin(kt),
\label{series2}
\end{equation}
with $a_0^{} = c_0^{}$ and
\begin{equation}
\ak = {1\over \pi} \int_0^{2\pi} f(t) \cos(kt) dt, \quad
\bk = {1\over \pi} \int_0^{2\pi} f(t) \sin(kt) dt \qquad \rlap{$(k\ge 1).$}
\label{coeffs2}
\end{equation}
The formulas (\ref{coeffs2}) can be derived by matching
the $e^{ikt}$ and $e^{-ikt}$ terms of (\ref{series2}) with
those of (\ref{series1}), which yields the identities 
\begin{equation}
\ck = {a_k^{}\over 2} + {\bk\over 2i},\quad
\cmk = {a_k^{}\over 2} - {\bk\over 2i} \qquad \rlap{$(k\ge 1),$}
\label{abccoeffs1}
\end{equation}
or equivalently,
\begin{equation}
\ak = \ck + \cmk, \quad \bk = i(\ck - \cmk) \qquad\rlap{$(k\ge 1).$}
\label{abccoeffs2}
\end{equation}
Note that if $f$ is real, then (\ref{coeffs2}) implies that
$\ak$ and $\bk$ are real.  The coefficients
$\ck$ are generally complex, and (\ref{abccoeffs1}) implies that they
satisfy $\cmk = \overline{c}_k^{}$.

The {\em degree $n$ trigonometric projection} of $f$ is the function
\begin{equation}
\fn(t) = \sum_{k=-n}^n \ck e^{ikt},
\label{trigpoly1}
\end{equation}
or equivalently
\begin{equation}
\fn(t) = \sum_{k=0}^n \ak \cos(kt) + \sum_{k=1}^n \bk \sin(kt).
\label{trigpoly2}
\end{equation}
More generally, we say that a function of the form
(\ref{trigpoly1})--(\ref{trigpoly2}) is a
{\em trigonometric polynomial of degree $n$}, and
we let $\Pn$ denote the $(2n+1)$-dimensional vector
space of all such polynomials.
The trigonometric projection $\fn$
is the least-squares approximant to $f$ in $\Pn$, i.e., the
unique best approximation to $f$ in the $L^2$ norm over $\pipi$.

\section{Trigonometric interpolants, aliasing, and barycentric formulas}
Mathematically, the simplest degree $n$ trigonometric approximation of
a periodic function $f$ is its trigonometric projection
(\ref{trigpoly1})--(\ref{trigpoly2}).  This approximation
depends on the values of $f(t)$ for all $t\in\pipi$ via
(\ref{coeffs1}) or (\ref{coeffs2}).  Computationally, a simpler
approximation of $f$ is its degree $n$ {\em trigonometric
interpolant}, which only depends on the values at
certain interpolation points.  
In our basic configuration, we wish to interpolate $f$ in equispaced
points by a function $\pn \in \Pn$.
Since the dimension of $\Pn$ is $2n+1$, there should be
$2n+1$ interpolation points.  We take these
{\em trigonometric points} to be
\begin{equation}
\tk = {2\pi k\over N}, \qquad 0\le k \le N-1
\label{trigpts}
\end{equation}
with $N=2n+1$.
The trigonometric interpolation problem goes back at least to 
the young Gauss's calculations of the orbit of the asteroid
Ceres in 1801~\cite{gauss}.

It is known that there exists a unique interpolant
$\pn\in\Pn$ to any set of data values $\fk = f(\tk)$.
Let us write $\pn$ in the form
\begin{equation}
\pn(t) = \sum_{k=-n}^n \ckt e^{ikt},
\label{interp1}
\end{equation}
or equivalently
\begin{equation}
\pn(t) = \sum_{k=0}^n \akt \cos(kt) + \sum_{k=1}^n \bkt \sin(kt),
\label{interp2}
\end{equation}
for some coefficients $\tilde c_{-n}^{},\dots,\tilde c_n^{}$
or equivalently $\tilde a_0^{},\dots,\tilde a_n^{}$
and $\tilde b_1^{},\dots,\tilde b_n^{}$.
The coefficients $\ckt$ and $\ck$ are related by
\begin{equation}
\ckt = \sum_{j=-\infty}^\infty c_{k + jN}^{}
\qquad\rlap{$(|k| \le n)$}
\label{aliasing1}
\end{equation}
(the {\em Poisson summation formula}),
and similarly $\akt$/$\bkt$ and $\ak$/$\bk$ are related by
$\tilde a_0^{} = \sum_{j=0}^\infty a_{jN}^{}$ and
\begin{equation}
\akt = \ak + \sum_{j=1}^\infty (a_{k+jN}^{} + a_{-k+jN}^{}),
\quad \bkt = \bk + \sum_{j=1}^\infty (b_{k+jN}^{} - b_{-k+jN}^{})
\label{aliasing2}
\end{equation}
for $1\le k \le n$.
We can derive these formulas by considering the
phenomenon of {\em aliasing.} For all~$j$, the functions
$\exp(i[k+jN]t)$ take the same 
values at the trigonometric points (\ref{trigpts}).  This implies that
$f$ and the trigonometric polynomial (\ref{interp1}) with
coefficients defined by (\ref{aliasing1}) take the same
values at these points.  In other words, (\ref{interp1}) is the
degree $n$ trigonometric interpolant to $f$.
A similar argument justifies (\ref{interp2})--(\ref{aliasing2}).

Another interpretation of the coefficients $\ckt, \akt, \bkt$ is
that they are equal to the approximations to $\ck, \ak, \bk$
one gets if the integrals (\ref{coeffs1}) and (\ref{coeffs2}) are
approximated by the periodic trapezoidal quadrature rule with $N$
points~\cite{tw}:
\begin{equation}
\ckt = {1\over N} \sum_{j=0}^{N-1} \fj e^{-ik\tj} ,
\label{coeffs3}
\end{equation}
\begin{equation}
\akt = {2\over N} \sum_{j=0}^{N-1} \fj \cos(k\tj), \quad
\bkt = {2\over N} \sum_{j=0}^{N-1} \fj \sin(k\tj) \qquad\rlap{$(k\ge 1).$}
\label{coeffs4}
\end{equation}
To prove this, we note that the trapezoidal rule computes
the same Fourier coefficients for $f$ as for $\pn$, since
they take the same values at the grid points;
but these must be equal to the true Fourier coefficients of $\pn$,
since the $N=(2n+1)$-point trapezoidal rule is exactly correct
for $e^{-2int}, \dots, e^{2int}$, hence for any
trigonometric polynomial of degree $2n$, hence in particular for any
trigonometric polynomial of degree $n$ times an exponential $\exp(-ikt)$
with $|k|\le n$.
From (\ref{coeffs3})--(\ref{coeffs4}) it is evident that
the discrete Fourier coefficients
$\ckt$, $\akt$, $\bkt$ can be computed by
the Fast Fourier Transform (FFT), which, in fact, Gauss invented
for this purpose.

Suppose one wishes to evaluate the interpolant $\pn(t)$ at certain points
$t$.  One good algorithm is to compute 
the discrete Fourier coefficients and then apply them.
Alternatively, another good approach is to
perform interpolation directly by means of the {\em barycentric
formula} for trigonometric interpolation, introduced by
Salzer~\cite{salzer} and later simplified by Henrici~\cite{henrici}:
\begin{equation}
\pn(t) = \sum_{k=0}^{N-1} (-1)^k f_k \csc({t-\tk\over 2})
\left/\, \sum_{k=0}^{N-1} (-1)^k \csc({t-\tk\over 2}) \right. 
\rlap{\quad ($N$  odd).}
\label{bary1}
\end{equation}
(If $t$ happens to be exactly equal to a grid point $\tk$,
one takes $\pn(t) = \fk$.)
The work involved in this formula
is just $O(N)$ operations per evaluation, and stability has
been established (after a small modification) in~\cite{AX}.
In practice, we find the Fourier coefficients and barycentric
formula methods equally effective.

In the above discussion, we have assumed that the number
of interpolation points, $N$, is odd.  However,
trigonometric interpolation, unlike trigonometric projection,
makes sense for an even number of degrees of freedom too
(see e.g.~\cite{faber,kress,zygmund});
it would be surprising if
FFT codes refused to accept input vectors of even lengths!
Suppose $n\ge 1$ is given and we wish to
interpolate $f$ in $N=2n$ trigonometric points (\ref{trigpts})
rather than $N=2n+1$.  This is one data value less than usual for
a trigonometric polynomial of this degree,
and we can lower the number of degrees of
freedom in (\ref{interp1}) by imposing the condition
\begin{equation}
\tilde c_{-n}^{} = \tilde c_n^{}
\label{cond1}
\end{equation}
or equivalently in (\ref{interp2}) by imposing the condition
\begin{equation}
\tilde b_n^{} = 0.
\label{cond2}
\end{equation}
This amounts to prescribing that the trigonometric interpolant
through sawtoothed data of the form $\fk = (-1)^k$
should be $\cos(nt)$ rather than some other function such as
$\exp(int)$---the only choice that ensures that
real data will lead to a real interpolant.  
An equivalent prescription is that an arbitrary number $N$
of data values, even or odd, will be interpolated by a linear
combination of the first $N$ terms of the sequence
\begin{equation}
1,\, \cos(t),\,  \sin(t),\,  \cos(2t),\, \sin(2t),\, \cos(3t),\, \dots.
\label{specialset}
\end{equation}

In this case of trigonometric interpolation with $N$ even, the formulas
(\ref{trigpts})--(\ref{coeffs4}) still hold, except that 
(\ref{aliasing1}) and (\ref{coeffs3}) must be multiplied by $1/2$
for $k = \pm n$.  FFT codes, however, do not
store the information that way.  Instead, following (\ref{cond1}), they
compute $\tilde a_{-n}^{}$ by (\ref{coeffs3}) with $2/N$ instead
of $1/N$ out front---thus effectively storing
$\tilde c_{-n}^{} +\tilde c_n^{}$ in the place of $\tilde c_{-n}^{}$---and
then apply (\ref{interp1}) with the $k=n$ term omitted.
This gives the right result for values of $t$ on
the grid, but not at points in-between.

Note that the conditions (\ref{cond1})--(\ref{specialset}) are
very much tied to
the use of the sample points (\ref{trigpts}).  If the grid
were translated uniformly, then different relationships
between $c_n^{}$ and $c_{-n}^{}$ or $a_n^{}/b_n^{}$ and
$a_{-n}^{}/b_{-n}^{}$ would be appropriate in 
(\ref{cond1})--(\ref{cond2}) and
different basis functions in (\ref{specialset}), and if the grid
were not uniform, then it would be hard to justify any particular
choices at all for even $N$.
For these reasons, even numbers of degrees of freedom make sense
in equispaced interpolation but not in other trigonometric
approximation contexts, in general.
Henrici~\cite{henrici} provides a modification
of the barycentric formula (\ref{bary1}) for the equispaced case $N=2n$.

\section{Approximation theory and quadrature}
The basic question of approximation theory is, will 
approximants to a function $f$ converge as the degree
is increased, and how fast?
The formulas of the last two
sections enable us to derive theorems addressing this question
for trigonometric projection and interpolation.
(For finer points of trigonometric approximation theory,
see~\cite{meinardus}.)
The smoother $f$ is, the faster its Fourier coefficients
decrease, and the faster the convergence of the approximants.
(If $f$ were merely continuous rather than
Lipschitz continuous, then the trigonometric version of the Weierstrass
approximation theorem~\cite[Section I.2]{katznelson}
would ensure that it could be approximated
arbitrarily closely by trigonometric polynomials,
but not necessarily by projection or interpolation.)

Our first theorem asserts that Fourier coefficients
decay algebraically if $f$ has a finite number of derivatives,
and geometrically
if $f$ is analytic.  Here and in Theorem~\ref{thm2} below,
we make use of the notion of the {\em total variation,} $V$, of
a periodic function $\varphi$ defined on $\pipi$, which is defined
in the usual way as the supremum of all sums $\sum_{i=1}^n 
|\varphi(x_i)-\varphi(x_{i-1})|$, where $\{x_i\}$
are ordered points in $\pipi$ with $x_0 = x_n$; $V$ is equal to the
the $1$-norm of $f'$, interpreted if necessary
as a Riemann--Stieltjes integral~\cite[Section I.4]{katznelson}.
Thus $|\sin(t)|$ on $\pipi$, for example, corresponds to $\nu =1$,
and $|\sin(t)|^3$ to $\nu =3$.  
All our theorems continue to assume that $f$ is $2\pi$-periodic.

\begin{theorem}
\label{thm1}
If $f$ is $\nu\ge 0$ times differentiable and
$f^{(\nu)}$ is of bounded variation $V$ on $\pipi$, then
\begin{equation}
|\ck| \le {V\over 2\pi |k|^{\nu+1}}.
\label{est1}
\end{equation}
If $f$ is analytic with $|f(t)|\le M$ in the open strip of
half-width $\alpha$ around the real axis in the complex $t$-plane, then
\begin{equation}
|\ck| \le  M e^{-\alpha|k|} .
\label{est2}
\end{equation}
\end{theorem}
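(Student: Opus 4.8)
The plan is to prove both estimates by a single, standard device: integration by parts applied to the defining integral \eqref{coeffs1}, together with (for the analytic case) a contour shift into the complex plane. Throughout I fix $k\neq 0$ (the cases $k=0$ being trivial or vacuous), and I treat $\pipi$ as a circle so that all boundary terms in integrations by parts vanish.

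For the first estimate, I start from $\ck = (1/2\pi)\int_0^{2\pi} f(t)e^{-ikt}\,dt$ and integrate by parts $\nu$ times, each time differentiating $f$ and integrating $e^{-ikt}$. Since $f,f',\dots,f^{(\nu-1)}$ are continuous and periodic, the boundary contributions cancel at each stage, and I pick up a factor $1/(ik)$ per step, giving $\ck = (ik)^{-\nu}(1/2\pi)\int_0^{2\pi} f^{(\nu)}(t)e^{-ikt}\,dt$. Now comes the one slightly delicate point: the last integration by parts must be done in the Riemann--Stieltjes sense, writing $\int_0^{2\pi} f^{(\nu)}(t)e^{-ikt}\,dt = -(ik)^{-1}\!\int_0^{2\pi} e^{-ikt}\,df^{(\nu)}(t)$, again with no boundary term by periodicity. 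Taking absolute values and using $|e^{-ikt}|=1$ bounds this last integral by the total variation $V$ of $f^{(\nu)}$, so $|\ck|\le V/(2\pi|k|^{\nu+1})$, which is \eqref{est1}. I expect this Riemann--Stieltjes step to be the main obstacle to state cleanly --- it is the mechanism by which one extra power of $|k|$ is extracted from a function that is only of bounded variation rather than differentiable --- but it is routine once set up, and the reference to \cite[Section I.4]{katznelson} covers the needed facts about $V$.

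For the second estimate, I use analyticity to shift the contour of integration. If $f$ is analytic in the strip $|\mathrm{Im}\,t|<\alpha$ and $2\pi$-periodic there, then for any $s$ with $0\le s<\alpha$ the function $t\mapsto f(t)e^{-ikt}$ is analytic in the strip, and by Cauchy's theorem (the contributions from the vertical sides at $t=0$ and $t=2\pi$ cancel by periodicity) I may replace the contour $[0,2\pi]$ by the horizontal segment from $-is$ to $2\pi-is$ when $k>0$, or from $is$ to $2\pi+is$ when $k<0$. Parametrizing as $t = \theta \mp is$ with $\theta\in[0,2\pi]$, the exponential becomes $e^{-ik\theta}e^{\mp ks}$, whose modulus is $e^{-|k|s}$, while $|f|\le M$ on this contour. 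Hence $|\ck|\le (1/2\pi)\cdot 2\pi\cdot M e^{-|k|s} = M e^{-|k|s}$, and letting $s\to\alpha$ (noting $M$ is the bound on the open strip, so this limit is legitimate as a supremum bound) gives $|\ck|\le M e^{-\alpha|k|}$, which is \eqref{est2}. The only care needed here is matching the sign of the contour shift to the sign of $k$ so that the exponential decays rather than grows; that done, the estimate is immediate.
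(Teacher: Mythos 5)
Your proof follows essentially the same route as the paper's: integration by parts $\nu+1$ times (the last step in the Riemann--Stieltjes sense to use the total variation $V$) for (\ref{est1}), and a contour shift downward for $k>0$ or upward for $k<0$ by a distance approaching $\alpha$ for (\ref{est2}), which is exactly the argument the paper sketches with a pointer to~\cite{tw}. The only blemish is a harmless sign slip in the Stieltjes integration by parts (the antiderivative of $e^{-ikt}$ gives $+(ik)^{-1}$, not $-(ik)^{-1}$), which disappears once absolute values are taken.
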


\begin{proof}
The bound (\ref{est1}) can be derived by integrating (\ref{coeffs1}) by
parts $\nu+1$ times.  
Equation (\ref{est2}) can be derived by shifting the interval
of integration $\pipi$ of (\ref{coeffs1}) downward in
the complex plane for $k>0$, or upward for $k<0$, by a
distance arbitrarily close to $\alpha$; see~\cite[Section 3]{tw}.
\end{proof}

To apply Theorem~\ref{thm1} to trigonometric approximations, we
note that the error in the degree $n$ trigonometric
projection (\ref{trigpoly1}) is
\begin{equation}
f(t) - \fn(t) = \sum_{|k|>n} \ck e^{ikt} ,
\label{error1}
\end{equation}
a series that converges absolutely and uniformly by the Lipschitz
continuity assumption on $f$.  Similarly, (\ref{aliasing1}) implies
that the error in trigonometric interpolation is
\begin{equation}
f(t) - \pn(t) = \sum_{|k|>n} \ck (e^{ikt} - e^{ik'\kern -1pt t}), 
\label{error2}
\end{equation}
where $k' = \hbox{mod}(k+n,2n+1)-n$ is the index that $k$ gets
aliased to on the $(2n+1)$-point grid, i.e., the integer of absolute value
$\le n$ congruent to $k$ modulo $2n+1$.
These formulas give us bounds on the error in trigonometric
projection and interpolation.

\begin{theorem}
\label{thm2}
If $f$ is $\nu\ge 1$ times differentiable and
$f^{(\nu)}$ is of bounded variation $V$ on $\pipi$, then
its degree $n$ trigonometric projection and interpolant satisfy 
\begin{equation}
\|f - \fn\|_\infty^{} \le {V\over \pi\kern .7pt \nu\kern .7pt  n^\nu}, \qquad
\|f - \pn\|_\infty^{} \le {2V\over \pi\kern .7pt \nu \kern .7pt n^\nu}.
\label{est3}
\end{equation}
If $f$ is analytic with $|f(t)|\le M$ in the open strip of
half-width $\alpha$ around the real axis in the complex $t$-plane, they
satisfy
\begin{equation}
\|f-\fn\|_\infty^{} \le {2M e^{-\alpha n}\over e^\alpha-1}, \qquad
\|f-\pn\|_\infty^{} \le {4M e^{-\alpha n}\over e^\alpha-1} .
\label{est4}
\end{equation}
\end{theorem}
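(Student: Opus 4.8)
The plan is to combine the exact error representations (\ref{error1}) and (\ref{error2}) with the coefficient bounds of Theorem~\ref{thm1}, reducing everything to the estimation of a single tail sum $\sum_{k>n}|\ck|$. First I would note that, applying the triangle inequality to (\ref{error1}) (a series converging absolutely by the Lipschitz hypothesis), one gets $\|f-\fn\|_\infty^{} \le \sum_{|k|>n}|\ck|$; and in the interpolation case, since $|e^{ikt}-e^{ik't}| \le |e^{ikt}|+|e^{ik't}| = 2$ for every $t$ and every $k$, the representation (\ref{error2}) gives $\|f-\pn\|_\infty^{} \le 2\sum_{|k|>n}|\ck|$. So the interpolation bound is in all cases at most twice the projection bound, which is precisely the factor of two appearing in (\ref{est3}) and (\ref{est4}); it then remains only to estimate $\sum_{|k|>n}|\ck| = 2\sum_{k=n+1}^\infty|\ck|$.

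In the finite-smoothness case I would substitute the bound (\ref{est1}) to obtain $\sum_{|k|>n}|\ck| \le (V/\pi)\sum_{k=n+1}^\infty k^{-(\nu+1)}$, and then bound the tail of this series (convergent because $\nu\ge 1$) by the integral $\int_n^\infty x^{-(\nu+1)}\,dx = 1/(\nu n^\nu)$, using that $x^{-(\nu+1)}$ is decreasing. This yields $\|f-\fn\|_\infty^{} \le V/(\pi\nu n^\nu)$, and the factor of two above gives the claimed bound on $\|f-\pn\|_\infty^{}$.

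In the analytic case I would substitute (\ref{est2}) and sum a geometric series: $\sum_{|k|>n}|\ck| \le 2M\sum_{k=n+1}^\infty e^{-\alpha k} = 2Me^{-\alpha(n+1)}/(1-e^{-\alpha}) = 2Me^{-\alpha n}/(e^\alpha-1)$, which is the projection bound in (\ref{est4}); doubling gives the interpolation bound.

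There is no real obstacle here: the steps are routine, and the only thing demanding attention is the constant bookkeeping together with the two elementary tail estimates. The one point worth double-checking is that the integral comparison — and the convergence of $\sum k^{-(\nu+1)}$ — genuinely relies on the hypothesis $\nu\ge 1$ (hence the slightly stronger assumption than in Theorem~\ref{thm1}), and, if one wishes the interpolation estimate to cover the even-$N$ variant of Section~3 as well, that halving the $k=\pm n$ coefficients only shrinks the relevant sums and so leaves the bounds intact.
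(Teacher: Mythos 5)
Your argument is correct and is essentially identical to the paper's (much terser) proof, which likewise bounds the tails (\ref{error1}) and (\ref{error2}) by (\ref{est1}) and (\ref{est2}), with the factor of two in the interpolation case coming from $|e^{ikt}-e^{ik't}|\le 2$, the integral comparison giving $\sum_{k>n}k^{-\nu-1}\le 1/(\nu n^\nu)$, and the geometric series giving $e^{-\alpha n}/(e^\alpha-1)$. The only nitpick is that your intermediate equality $\sum_{|k|>n}|c_k^{}| = 2\sum_{k=n+1}^\infty |c_k^{}|$ holds in general only for real $f$, but this is harmless since the bounds of Theorem~\ref{thm1} depend only on $|k|$, so the subsequent inequalities stand as written.
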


\begin{proof}
The estimates (\ref{est3}) follow by bounding the 
tails (\ref{error1}) and (\ref{error2}) with (\ref{est1}), and
(\ref{est4}) likewise by bounding them with (\ref{est2}).
\end{proof}

A slight variant of this argument gives an estimate for quadrature.
If $I$ denotes the integral of a function $f$ over $\pipi$ 
and $\IN$ its approximation by the $N$-point
periodic trapezoidal rule, then from (\ref{coeffs1}) and (\ref{coeffs3}),
we have $I = 2\pi c_0^{}$ and $\IN = 2\pi \tilde c_0^{}$.
By (\ref{aliasing1}) this implies
\begin{equation}
\IN - I = 2\pi \sum_{j\ne 0} c_{jN}^{},
\label{trapest}
\end{equation}
which gives the following result.
\begin{theorem}
\label{thm4}
If $f$ is $\nu\ge 1$ times differentiable and
$f^{(\nu)}$ is of bounded variation $V$ on $\pipi$, then
the $N$-point periodic trapezoidal rule
approximation to its integral over $\pipi$ satisfies
\begin{equation}
|\IN - I| \le {4 V\over N^{\nu+1}}.
\label{trap1}
\end{equation}
If $f$ is analytic with $|f(t)|\le M$ in the open strip of
half-width $\alpha$ around the real axis in the complex $t$-plane,
it satisfies
\begin{equation}
|\IN-I| \le {4\pi M \over e^{\alpha N}-1}.
\label{trap2}
\end{equation}
\end{theorem}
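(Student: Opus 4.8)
The plan is to take the identity (\ref{trapest}) as the starting point, since it already expresses the quadrature error purely in terms of the Fourier coefficients $c_{jN}^{}$ at integer multiples of $N$, and then to substitute the decay estimates of Theorem~\ref{thm1}.

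For the bounded-variation case, I would apply (\ref{est1}) at index $k = jN$ for each $j\neq 0$. Because $|jN| = N|j|$, this gives $|c_{jN}^{}| \le V/(2\pi N^{\nu+1}|j|^{\nu+1})$, and inserting this into (\ref{trapest}) yields
\[
|\IN - I| \;\le\; {V\over N^{\nu+1}}\sum_{j\neq 0}{1\over|j|^{\nu+1}} \;=\; {2V\over N^{\nu+1}}\sum_{j=1}^\infty {1\over j^{\nu+1}}.
\]
It then remains only to bound the sum $\sum_{j\ge1} j^{-(\nu+1)}$ uniformly over $\nu\ge 1$: it is monotonically decreasing in $\nu$, hence at most its value $\pi^2/6$ at $\nu = 1$, so the prefactor $2\sum_{j\ge1}j^{-(\nu+1)}$ is at most $\pi^2/3 < 4$, which is (\ref{trap1}).

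For the analytic case, I would instead feed (\ref{est2}) into (\ref{trapest}): $|c_{jN}^{}|\le M e^{-\alpha N|j|}$, whence
\[
|\IN - I| \;\le\; 2\pi M\sum_{j\neq 0}e^{-\alpha N|j|} \;=\; 4\pi M\sum_{j=1}^\infty e^{-\alpha N j} \;=\; 4\pi M\,{e^{-\alpha N}\over 1-e^{-\alpha N}} \;=\; {4\pi M\over e^{\alpha N}-1},
\]
which is exactly (\ref{trap2}); the only computation here is summing a geometric series.

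I do not expect a real obstacle. The one point to keep in mind is that (\ref{est1}) is only meaningful for a nonzero Fourier index, but this causes no difficulty because the sum in (\ref{trapest}) excludes $j=0$ by construction. The only genuinely quantitative choice is the clean constant $4$ in (\ref{trap1}), which is there to swallow the factor $2\zeta(\nu+1)$, whose worst case over $\nu\ge1$ is $2\zeta(2)=\pi^2/3\approx 3.29$, comfortably below $4$.
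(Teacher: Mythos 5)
Your proposal is correct and follows essentially the same route as the paper: bound the aliasing identity (\ref{trapest}) term by term using (\ref{est1}) and (\ref{est2}), obtaining $2V\zeta(\nu+1)/N^{\nu+1}$ (simplified via $\zeta(\nu+1)\le\zeta(2)$ to get the constant $4$) in the bounded-variation case and a geometric series summing to $4\pi M/(e^{\alpha N}-1)$ in the analytic case.
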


\begin{proof}
These results follow by bounding (\ref{trapest}) with 
(\ref{est1}) and (\ref{est2}) as in the proof of Theorem~\ref{thm2}.
From (\ref{est1}), the bound one gets
is $2V\zeta(\nu+1)/N^{\nu+1}$, where $\zeta$ is the
Riemann zeta function, which we have simplified by
the inequality $\zeta(\nu+1)\le \zeta(2) < 2$ for $\nu\ge 1$.
The estimate (\ref{trap2}) originates with Davis~\cite{davis};
see also~\cite{kress,tw}.
\end{proof}

Finally, in a section labeled ``Approximation theory'' we must mention another
famous candidate for periodic function
approximation: best approximation in the $\infty$-norm. 
Here the trigonometric version of the Chebyshev alternation theorem holds,
assuming $f$ is real.
This result is illustrated below in Figure~\ref{bestapprox}.

\begin{theorem}
Let $f$ be real and continuous on
the periodic interval $\pipi$.  For each $n\ge 0$,
$f$ has a unique best approximant $\pns\in \Pn$ with
respect to the norm $\|\cdot\|_\infty^{}$, and $\pns$ is characterized
by the property that the error curve $(f-\pns)(t)$ equioscillates on
$[\kern .5pt 0,2\pi)$ between at least $2n+2$ equal extrema $\pm\|f-\pns\|_\infty^{}$ of
alternating signs.
\end{theorem}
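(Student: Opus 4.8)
The plan is to transcribe the classical proof of the Chebyshev equioscillation theorem into the trigonometric setting, with $\Pn$ in place of the algebraic polynomials of degree $n$ and with the following lemma replacing ``a nonzero polynomial of degree $n$ has at most $n$ zeros'': \emph{a function $p\in\Pn$ that does not vanish identically has at most $2n$ zeros in $[\kern .5pt 0,2\pi)$, and in particular changes sign at most $2n$ times around the circle.} This is immediate from the substitution $z=e^{it}$: writing $p(t)=\sum_{k=-n}^n c_k e^{ikt}$ one has $e^{int}p(t)=g(e^{it})$ with $g(z)=\sum_{m=0}^{2n}c_{m-n}z^m$ a nonzero algebraic polynomial of degree $\le 2n$, and the zeros of $p$ in $[\kern .5pt 0,2\pi)$ correspond bijectively to zeros of $g$ on the unit circle. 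Existence of a best approximant is the usual compactness argument: $p\mapsto\|f-p\|_\infty^{}$ is continuous and coercive on the finite-dimensional space $\Pn$, so it attains its minimum at some $\pns$; write $E=\|f-\pns\|_\infty^{}$. If $E=0$ then $f\in\Pn$ and all assertions are trivial, so assume $E>0$.

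\emph{Necessity.} Let $\pns$ be a best approximant and put $e=f-\pns$. A one-line constant-shift argument shows $e$ attains both $+E$ and $-E$: otherwise $e-c$ would have norm $<E$ for a suitable constant $c\in\Pn$. Consider the disjoint nonempty compact sets $A=\{e=E\}$ and $B=\{e=-E\}$, and let $2\mu$ be the largest number of points that can be placed in cyclic order on $[\kern .5pt 0,2\pi)$ so that membership in $A$ and $B$ alternates (this count is necessarily even, since the alternation must close up around the circle). Suppose for contradiction that $2\mu<2n+2$, hence $\mu\le n$. Then $A$ and $B$ can be separated by $2\mu$ ``gap'' points $\tau_1<\cdots<\tau_{2\mu}$ — one between each maximal run of $A$-extrema and the adjacent run of $B$-extrema — and the function $q_0(t)=\prod_{j=1}^{2\mu}\sin((t-\tau_j)/2)$ is $2\pi$-periodic precisely because it is a product of an even number of $4\pi$-periodic factors; it lies in $\Pn$ (it has degree $\mu\le n$), changes sign exactly at the $\tau_j$, and therefore has constant, alternating signs on the $2\mu$ arcs they cut out. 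Choosing the global sign $\sigma\in\{\pm1\}$ so that $\sigma q_0$ agrees in sign with $e$ on a neighbourhood $\{|e|\ge E-\eta\}$ of $A\cup B$ (with $\eta>0$ small), and then taking $\delta\in(0,\eta)$ small, one verifies that $\|f-(\pns+\sigma\delta q_0)\|_\infty^{}<E$, contradicting optimality. Hence $2\mu\ge 2n+2$, which is the asserted equioscillation.

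\emph{Sufficiency and characterization.} Conversely, suppose $p\in\Pn$ and $e=f-p$ attains $\pm E$, $E=\|e\|_\infty^{}$, with alternating signs at $2n+2$ points $s_1,\dots,s_{2n+2}$ in cyclic order, and suppose some $q\in\Pn$ satisfied $\|f-q\|_\infty^{}<E$. Writing $p-q=(f-q)-e$ and evaluating at the $s_i$ shows $p-q$ is strictly negative wherever $e=E$ and strictly positive wherever $e=-E$, so $p-q\in\Pn$ changes sign at least $2n+2$ times around the circle; by the lemma $p-q\equiv0$, a contradiction. Thus such a $p$ is a best approximant, and together with the necessity step this shows that equioscillation at $\ge 2n+2$ points characterizes the best approximants.

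\emph{Uniqueness.} If $p_1,p_2\in\Pn$ are both best approximants, then $p=(p_1+p_2)/2$ is best as well, by convexity of $\|\cdot\|_\infty^{}$, so by necessity $f-p$ equioscillates at $\ge 2n+2$ points. At each such point $t$, the real numbers $(f-p_1)(t)$ and $(f-p_2)(t)$ have modulus at most $E$ while their average has modulus $E$; hence they are equal, so $p_1-p_2$ vanishes at more than $2n$ points and therefore $p_1-p_2\equiv0$. The step I expect to demand the most care is the separation argument in the necessity proof: organising the extreme set $A\cup B$ into its alternating arcs around the circle, verifying that an alternation count below $2n+2$ really does permit separation by an even number $2\mu\le 2n$ of points, and choosing $\eta$ small enough that the sign of $\sigma q_0$ matches that of $e$ throughout $\{|e|\ge E-\eta\}$ — these are exactly the places where the cyclic geometry of the circle enters and where the parity ``$2n+2$, not $2n+1$'' is pinned down.
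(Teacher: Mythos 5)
Your proof is correct, but note that the paper itself offers no argument here: its ``proof'' is a citation to Meinardus, Section~5.2, so what you have written is a genuine self-contained substitute rather than a variant of an in-paper argument. Your route is the classical Chebyshev/de~la~Vall\'ee~Poussin alternation proof transplanted to the circle, and the one ingredient that must replace the algebraic zero count --- that a nonzero $p\in\Pn$ has at most $2n$ zeros, hence at most $2n$ sign changes, in a period --- is exactly right via $e^{int}p(t)=g(e^{it})$ with $\deg g\le 2n$. The delicate points you flag do all close: the cyclic alternation count is even, so the number of separating points $\tau_j$ is even, which is what makes $q_0(t)=\prod_j\sin((t-\tau_j)/2)$ both $2\pi$-periodic and of degree $\mu\le n$, and what allows a single global sign $\sigma$ to match $\sigma q_0$ against $e$ on all arcs at once; in the perturbation step one should say explicitly that $\eta$ is taken smaller than $E-\max_j|e(\tau_j)|$, so that the compact set $S=\{|e|\ge E-\eta\}$ contains no $\tau_j$, giving $\min_S^{}|q_0^{}|>0$ and hence a uniform strict decrease $\|e-\sigma\delta q_0^{}\|_\infty^{}\le\max\bigl(E-\delta\min_S^{}|q_0^{}|,\;E-\eta+\delta\|q_0^{}\|_\infty^{}\bigr)<E$. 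The sufficiency and uniqueness steps are standard and correct (with $2n+2$ points of even count, linear alternation on $[\kern .5pt 0,2\pi)$ is automatically cyclic, so $p-q$ acquires $2n+2$ distinct zeros). One small remark worth adding: since the paper's $\Pn$ admits complex coefficients, your argument as written treats real approximants; the complex case reduces to it because replacing a candidate $q$ by $\mathop{\rm Re} q$ cannot increase $\|f-q\|_\infty^{}$ when $f$ is real, and at the $\ge 2n+2$ real extrema one gets $\mathop{\rm Im} q=0$, whence $\mathop{\rm Im} q\equiv 0$ by the same zero-counting lemma.
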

\begin{proof}
See~\cite[Section 5.2]{meinardus}.
\end{proof}

\section{Trigfun computations}
Building on the mathematics of the past three sections,
Chebfun was extended in 2014 to incorporate trigonometric
representations of periodic functions alongside its traditional
Chebyshev representations.
(Here and in
the remainder of the paper, we assume the reader is familiar
with Chebfun.)
Our convention is that a {\em trigfun} is a
representation via coefficients $\ck$ as in (\ref{trigpoly1})
of a sufficiently smooth
periodic function $f$ on an interval by a trigonometric
polynomial of adaptively determined degree, the aim always being
accuracy of 15 or 16 digits relative
to the $\infty$-norm of the function on the interval.
This follows the same pattern as traditional Chebyshev-based
chebfuns, which are representations of nonperiodic functions
by polynomials, and a trigfun is not a distinct object from a
chebfun but a particular type of chebfun.  The default interval,
as with ordinary chebfuns, is $\ones$, and other intervals are
handled by the obvious linear
transplantation.\footnote{\label{footn}Actually,
one aspect of the transplantation is not obvious, an indirect
consequence of the translation-invariance
of trigonometric functions.
The nonperiodic function $f(x) = x$ defined on $[-1,1]$, for
example, has Chebyshev coefficients $a_0^{}=0$ and $a_1^{} = 1$,
corresponding to the expansion $f(x) = 0T_0^{}(x) + 1T_1^{}(x)$.
Any user will expect the transplanted function $g(x) = x-1$
defined on $[0,2]$ to have the same coefficients $a_0^{}=0$
and $a_1^{} = 1$, corresponding to the transplanted expansion
$g(x) = 0T_0^{}(x-1) + 1T_1^{}(x-1)$, and this is what Chebfun
delivers.  By contrast, consider the periodic function $f(t)
= \cos t$ defined on $[-\pi,\pi]$ and its transplant $g(t) =
\cos(t-\pi) = -\cos t$ on $\pipi$.  A user will expect the
expansion coefficients of $g$ to be not the same as those
of $f$, but their negatives!  This is because we expect to
use the same basis functions $\exp(ikx)$ or $\cos(kx)$ and
$\sin(kx)$ on any interval of length $2\pi$, however translated.
The trigonometric part of Chebfun is designed accordingly.}

For example, here we construct and plot a trigfun for $\cos(t) + 
\sin(3t)/2$ on $\pipi$:

\begin{figure}
\begin{center}
\includegraphics[scale=.5]{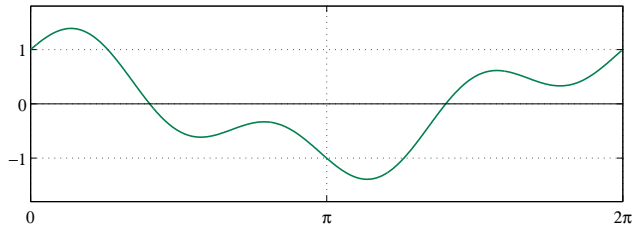}
\end{center}
\caption{\label{fig1a} The trigfun representing
$f(t) = \cos(t)+ \sin(3t)/2$ on $\pipi$.  One can evaluate
$f$ with\/ {\tt f(t)}, compute its definite integral with\/ {\tt sum(f)}
or its maximum with\/ {\tt max(f)},
find its roots with {\tt roots(f)}, and so on.
}
\end{figure}

{\small
\topspace\begin{verbatim}
        >> f = chebfun('cos(t) + sin(3*t)/2', [0 2*pi], 'trig'), plot(f)
\end{verbatim}
\bottomspace\par}

\noindent
The plot appears in Figure~\ref{fig1a}, and
the following text output is produced, with the flag {\tt trig} signalling
the periodic representation.

{\small
\topspace\begin{verbatim}
        f =
           chebfun column (1 smooth piece)
               interval       length     endpoint values trig
        [       0,     6.3]        7         1        1 
\end{verbatim}
\bottomspace\par}

\noindent
We see that Chebfun has determined that this function
$f$ is of length $N=7$.  This means
that there are $7$ degrees of freedom, i.e., $f$ is a trigonometric polynomial
of degree $n=3$, whose coefficients we can extract with
{\tt c = trigcoeffs(f)}, or in cosine/sine form
with {\tt [a,b] = trigcoeffs(f)}.

Note that the Chebfun constructor does not analyze its input
symbolically, but just evaluates the function at trigonometric points
(\ref{trigpts}), and from this information the degree
and the values of the coefficients are determined.
The constructor also detects when a function is real.
A trigfun constructed in the ordinary manner
is always of odd length $N$, corresponding to a trigonometric
polynomial of degree $n = (N-1)/2$, though it is
possible to make even-length trigfuns by explicitly specifying $N$.

To construct a trigfun, Chebfun samples the function on grids of
size $16, 32, 64,\dots$ and tests the resulting discrete Fourier
coefficients for convergence down to relative machine
precision.  (Powers of 2 are used
since these are particularly efficient for the FFT, even though
the result will ultimately be trimmed to an odd number
of points.  As with non-trigonometric Chebfun,
the engineering details are complicated and under ongoing
development.)  When convergence is achieved, the
series is chopped at an appropriate point and the degree reduced
accordingly.

Once a trigfun has been created, computations can be carried out
in the usual Chebfun fashion via overloads of familiar
MATLAB commands.  For example,

{\small
\topspace\begin{verbatim}
        >> sum(f.^2)
        ans = 3.926990816987241
\end{verbatim}
\bottomspace\par}

\noindent
This number is computed by integrating the trigonometric
representation of $f^2$, i.e., by returning the number $2\pi c_0^{}$
corresponding to the trapezoidal rule applied to $f^2$ as
described around Theorem~\ref{thm4}.
The default 2-norm is the square root of this result,

{\small
\topspace\begin{verbatim}
        >> norm(f)
        ans = 1.981663648803005
\end{verbatim}
\bottomspace\par}

\noindent
Derivatives of functions are computed
by the overloaded command {\tt diff}.
(In the unusual case where a trigfun has been constructed of
even length, differentiation will increase its length by $1$.)
The zeros of $f$ are found with {\tt roots}:

{\small
\topspace\begin{verbatim}
        >> roots(f)
        ans =
           1.263651122898791
           4.405243776488583
\end{verbatim}
\bottomspace\par}

\noindent
and Chebfun determines maxima and minima by first computing
the derivative, then checking all of its roots:

{\small
\topspace\begin{verbatim}
        >> max(f)
        ans = 1.389383416980387
\end{verbatim}
\bottomspace\par}

\noindent
Concerning the algorithm used for periodic rootfinding,
one approach would be to solve a companion
matrix eigenvalue problem, and $O(n^2)$ algorithms for
this task have recently been developed~\cite{amvw}.
When development of these methods settles down, they may be incorporated
in Chebfun.  For the moment, trigfun rootfinding
is done by first converting the problem to nonperiodic Chebfun form using
the standard Chebfun constructor,
whereupon we take advantage of Chebfun's $O(n^2)$ recursive interval
subdivision strategy~\cite{bt}.  
This shifting to subintervals for rootfinding is an
example of an operation that breaks periodicity as mentioned in
item 7 of the introduction.

The main purpose of the periodic part of Chebfun is to
enable machine precision computation with periodic
functions that are not exactly trigonometric polynomials.
For example, 
$\exp(\sin t)$ on $\pipi$ is represented by a trigfun
of length $27$, i.e., a
trigonometric polynomial of degree 13:

{\small
\topspace\begin{verbatim}
        g = chebfun('exp(sin(t))', [0 2*pi], 'trig')
        g =
           chebfun column (1 smooth piece)
               interval       length     endpoint values trig
        [       0,     6.3]       27         1        1 
\end{verbatim}
\bottomspace\par}

\noindent
The coefficients can be plotted on a log scale with the
command {\tt plotcoeffs(f)},
and the in 
Figure~\ref{fig2} reveals the faster-than-geometric decay
of an entire function.

\begin{figure}
\begin{center}
\includegraphics[scale=.5]{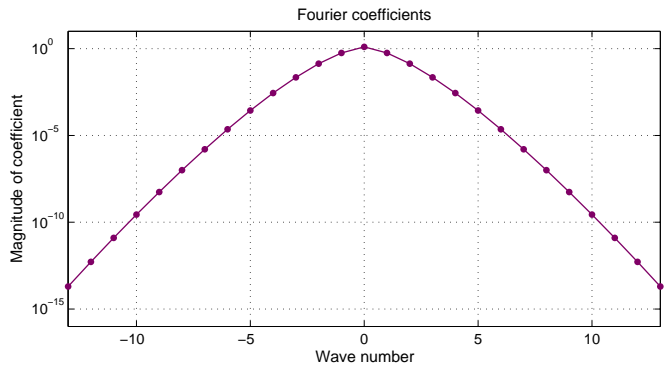}
\end{center}
\caption{\label{fig2} Absolute values of the Fourier coefficients of
the trigfun for\/ $\exp(\sin t)$ on $\pipi$.
This is an entire function (analytic throughout
the complex $t$-plane), and in accordance with Theorem~$\ref{thm1}$, the
coefficients decrease faster than geometrically.}
\end{figure}

Figure~\ref{twomore} shows trigfuns and coefficient plots
for $f(t)=\tanh(5\cos(5t))$ and
$g(t)=\exp(-1/\max\{0, 1-t^2/4\})$ on
$[-\pi, \pi]$.  The latter is $C^\infty$ but not analytic.
Figure~\ref{signals} shows a further pair of examples that
we call an ``AM signal'' and an ``FM signal''.  These are
among the preloaded functions available with
{\tt cheb.gallerytrig}, Chebfun's trigonometric analogue
of the MATLAB {\tt gallery} command.

\begin{figure}
\begin{center}
\includegraphics[scale=.6]{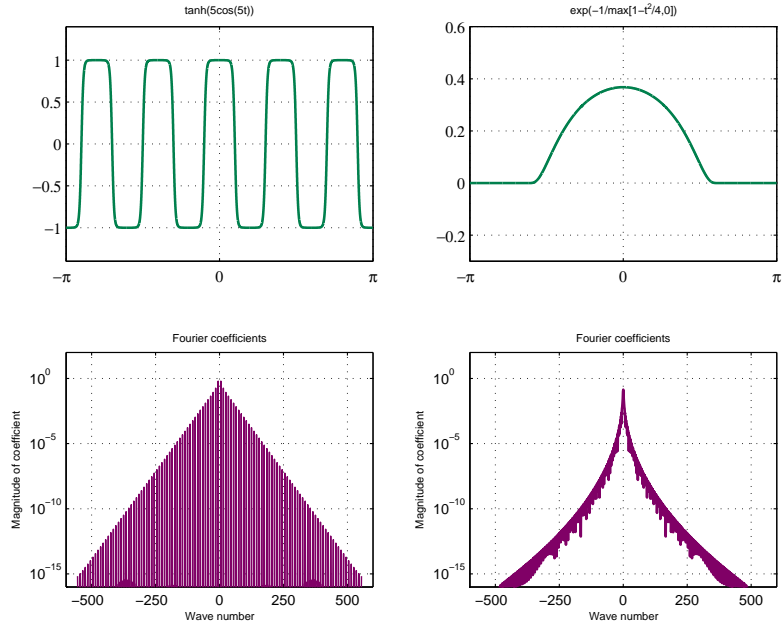}
\end{center}
\caption{\label{twomore} Trigfuns of\/
$\tanh(5\sin t)$ and\/ $\exp(-100(t+.3)^2)$
(upper row) and corresponding absolute values
of Fourier coefficients (lower row).}
\end{figure}

\begin{figure}
\begin{center}
\includegraphics[scale=.6]{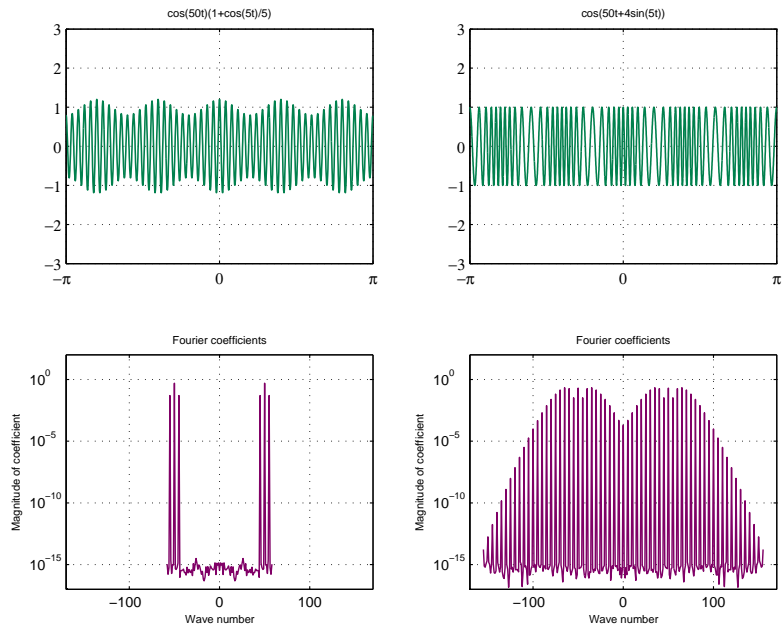}
\end{center}
\caption{\label{signals} Trigfuns of the ``AM signal''
$\cos(50t)(1+\cos(5t)/5)$ and the ``FM signal''
$\cos(50t+4\sin(5t))$ (upper row) and corresponding absolute values
of Fourier coefficients (lower row).}
\end{figure}

Computation with trigfuns, as with nonperiodic chebfuns,
is carried out by
a continuous analogue of floating point arithmetic~\cite{cacm}.
To illustrate the ``rounding'' process involved,
the degrees of the trigfuns above are
555 and 509, respectively.  Mathematically,
their product is of degree 1064.  Numerically, however,
Chebfun achieves 16-digit accuracy with degree 556.

Here is a more complicated example of Chebfun rounding adapted
from~\cite{cacm}, where it is computed with nonperiodic 
representations.

{\small
\topspace\begin{verbatim}
        f = chebfun(@(t) sin(pi*t), 'trig')
        s = f
        for j = 1:15
          f = (3/4)*(1 - 2*f.^4), s = s + f
        end
\end{verbatim}
\bottomspace\par}

\noindent
This program takes 15 steps of an iteration that in principle
quadruples the degree at each step, giving a function
$s$ at the end of degree
$4^{15} = \hbox{1,073,741,824}$.  In actuality,
however, because of the rounding to 16 digits,
the degree comes out one million times smaller as 1148.  This function
is plotted in Figure~\ref{logistic}.  Following~\cite{cacm},
we can compute the roots of $s-8$
in half a second on a desktop machine:

{\small
\topspace\begin{verbatim}
        >> roots(s-8)
        ans =
          -0.992932107411876
          -0.816249934290177
          -0.798886729723433
          -0.201113270276572
          -0.183750065709828
          -0.007067892588112
           0.346696120418255
           0.401617073482111
           0.442269489632475
           0.557730510367530
           0.598382926517899
           0.653303879581760
\end{verbatim}
\bottomspace\par}

\noindent
The integral with {\tt sum(s)} is
$15.265483825826763$, correct except in the last two digits.

\begin{figure}
\begin{center}
\includegraphics[scale=.5]{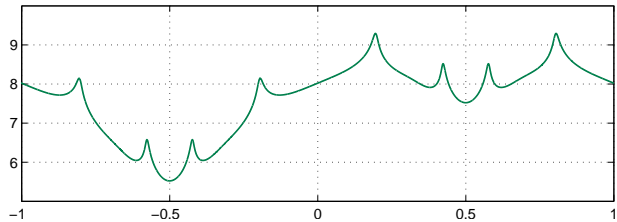}
\end{center}
\caption{\label{logistic} After fifteen steps of an iteration,
this periodic function has degree $1148$ in its Chebfun representation
rather than the mathematically exact figure {\rm 1,073,741,824}.}
\end{figure}

If one tries to construct a trigfun by sampling
a function that is not smoothly periodic,
Chebfun will by default go up to length $2^{16}$ and then
issue a warning:

{\small
\topspace\begin{verbatim}
        >> h = chebfun('exp(t)', [0 2*pi], 'trig')
        Warning: Function not resolved using 65536 pts.
                 Have you tried a non-trig representation? 
\end{verbatim}
\bottomspace\par}

\noindent
On the other hand, computations that are known
to break periodicity or smoothness will result in the representation
being automatically cast from a trigfun to a chebfun.
For example, here we define $g$ to be the
absolute value of the function 
$f(t) = \cos(t) + \sin(3t)/2$ of Figure~\ref{fig1a}.
The system detects that $f$ has zeros,
implying that $g$ will probably not be smooth, and
accordingly constructs it not as a trigfun but as an ordinary chebfun with
several pieces:

{\small
\topspace\begin{verbatim}
        >> f = chebfun('cos(t) + sin(3*t)/2', [0 2*pi], 'trig'), g = abs(f)
        g =
           chebfun column (3 smooth pieces)
               interval       length     endpoint values  
        [       0,     1.3]       17         1  3.8e-16 
        [     1.3,     4.4]       25   1.8e-15  7.3e-16 
        [     4.4,     6.3]       20  -6.6e-18        1 
        Total length = 62.
\end{verbatim}
\bottomspace\par}

\noindent
Similarly, if you add or multiply a trigfun and a chebfun,
the result is a chebfun.

\begin{figure}
\begin{center}
\includegraphics[scale=.5]{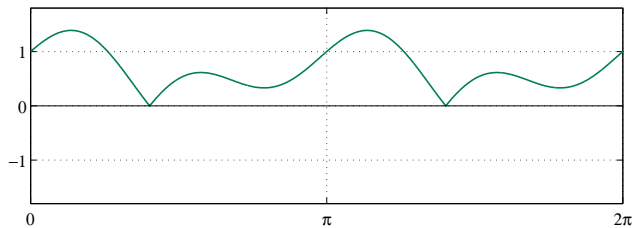}
\end{center}
\caption{\label{fig1b} When the absolute value of
the trigfun $f$ of Figure~$\ref{fig1a}$ is computed,
the result is a nonperiodic chebfun with three smooth pieces.}
\end{figure}

\section{Applications}
Analysis of periodic functions and signals is one of the oldest
topics of mathematics and engineering.  Here we give
six examples of how a system for automating such computations may be useful.

{\em Complex contour integrals.}
Smooth periodic integrals arise ubiquitously in complex analysis.
For example, suppose we wish to determine the number of zeros of
$f(z) = \cos(z) - z$ in the
complex unit disk.  The answer is given by
\begin{equation}
m = {1\over 2\pi i} \int {f'(z)\over f(z) } dz
= {1\over 2\pi i} \int {1\over f(z) } {df\over dt} dt
\label{contourint}
\end{equation}
if $z = \exp(it)$ with $t\in \pipi$.
With periodic Chebfun, we can compute $m$ by

{\small
\topspace\begin{verbatim}
        >> z = chebfun('exp(1i*t)', [0 2*pi], 'trig');
        >> f = cos(z) - z;
        >> m = real(sum(diff(f)./f)/(2i*pi))
        m = 1.000000000000000
\end{verbatim}
\bottomspace\par}

\noindent
Changing the integrand from $f'(z)/f(z)$ to $z f'(z)/f(z)$ 
gives the location of the zero, correct to all digits displayed.

{\small
\topspace\begin{verbatim}
        >> z0 = real(sum(z.*diff(f)./f)/(2i*pi))
        z0 = 0.739085133215161
\end{verbatim}
\bottomspace\par}

\noindent
(The {\tt real} commands are included to remove imaginary
rounding errors.)
For wide-ranging extensions of calculations like these, including
applications to matrix eigenvalue problems, see~\cite{akt}.

{\em Linear algebra.}
Chebfun does not work from explicit formulas: to construct a
function, it is only necessary to be able to evaluate it.
This is an extremely useful feature for linear algebra calculations.
For example, the matrix
\begin{equation}
\def\r#1{\phantom{xx}\llap{$#1$}}
\def\rr#1{\phantom{xx,}\llap{$#1$}}
A = {1\over 3}
\pmatrix{
\r{2} & \rr{-2i} & \r{1} & \r{1} \cr
\r{2i} & \rr{-2} & \r{0} & \r{2} \cr
\r{-2} & \rr{0} & \r{1} & \r{2} \cr
\r{0} & \rr{i} & \r{0} & \r{2}
}
\end{equation}
has all its eigenvalues in the unit disk.  A question with
the flavor of control and stability theory is, what is the
maximum resolvent norm $\|(zI-A)^{-1}\|$ for $z$ on the unit
circle?  We can calculate the answer with the code below, which
constructs a periodic chebfun of degree $n=569$.   The maximum is
$27.68851$, attained with $z = \exp(0.454596\kern.5pt i)$.

{\small
\topspace\begin{verbatim}
        A = [2 -2i 1 1; 2i -2 0 2; -2 0 1 2; 0 1i 0 2]/3, I = eye(4)
        ff = @(t) 1/min(svd(exp(1i*t)*I-A))
        f = chebfun(ff, [0 2*pi], 'trig', 'vectorize')
        [maxval,maxpos] = max(f)
\end{verbatim}
\bottomspace\par}

\begin{figure}
\begin{center}
\includegraphics[scale=.5]{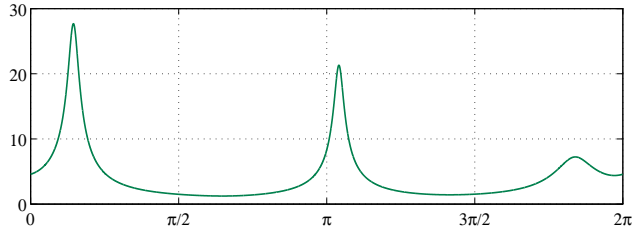}
\end{center}
\caption{\label{resolve} Resolvent norm $\|(zI-A)^{-1}\|$
for a $4\times 4$ matrix $A$ with $z= e^{it}$ on
the unit circle.}
\end{figure}

{\em Circular convolution and smoothing.}
The circular or periodic convolution of two functions
$f$ and $g$ with period $T$ is defined by
\begin{equation}
(f*g)(t) := \int_{t_0}^{t_0 + T} g(s)f(t-s)ds,
\end{equation}
where $t_0$ is aribtrary. Circular convolutions can be
computed for trigfuns with the {\tt circconv} function, whose algorithm
consists of coefficientwise multiplication in Fourier space.
For example, here is a trigonometric interpolant through 
$201$ samples of a smooth function plus noise, shown
in the upper-left panel of Figure~\ref{noisy}.

{\small
\topspace\begin{verbatim}
        N = 201, tt = trigpts(N, [-pi pi]) 
        ff = exp(sin(tt)) + 0.05*randn(N,1) 
        f = chebfun(ff, [-pi pi], 'trig')
\end{verbatim}
\bottomspace\par}

\noindent
The high wave numbers can be smoothed by convolving
$f$ with a mollifier.  Here we use
a Gaussian of standard deviation $\sigma=0.1$ (numerically
periodic for $\sigma\le 0.35$).  The result is
shown in the upper-right panel of the figure.

{\small
\topspace\begin{verbatim}
        gaussian = @(t,sigma) 1/(sigma*sqrt(2*pi))*exp(-0.5*(t/sigma).^2) 
        g = @(sigma) chebfun(@(t) gaussian(t,sigma), [-pi pi], 'trig') 
        h = circconv(f, g(0.1)) 
\end{verbatim}
\bottomspace\par}

\begin{figure}
\begin{center}
\includegraphics[scale=.6]{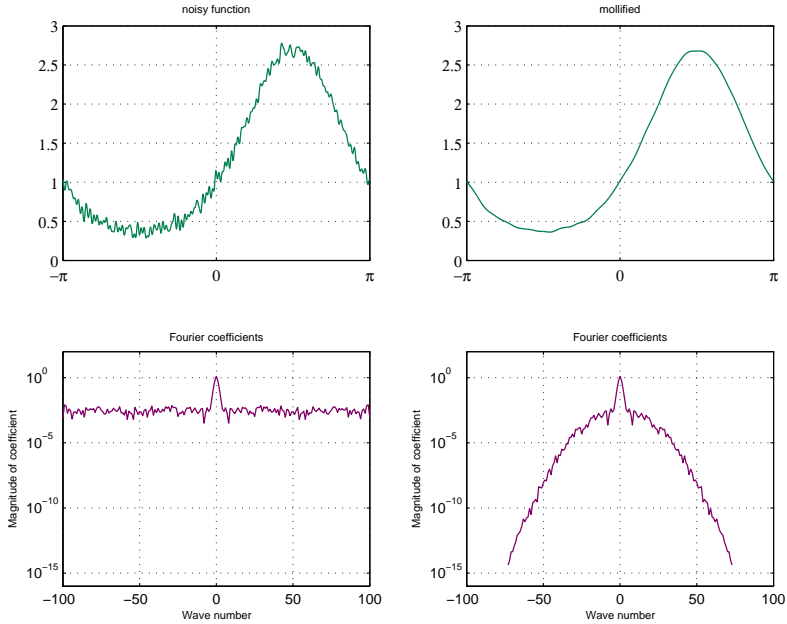}
\end{center}
\caption{\label{noisy} Circular convolution of a noisy function with
a smooth mollifier.}
\end{figure}

{\em Fourier coefficients of non-smooth functions.}
A function $f$ that is not smoothly periodic will at best
have a very slowly converging trigonometric series, but still,
one may be interested in its Fourier coefficients.  These can
be computed by applying {\tt trigcoeffs} to a chebfun representation
of $f$ and specifying how many coefficients are required; the
integrals (\ref{coeffs1}) are then evaluated numerically by
Chebfun's standard method of Clenshaw--Curtis quadrature.
For example,
Figure~\ref{runge} shows a portrayal of the Gibbs phenomenon
from Runge's 1904 book together with its Chebfun equivalent computed
in a few seconds with the commands

{\small
\topspace\begin{verbatim}
        t = chebfun('t', [-pi pi]), f = (abs(t) < pi/2) 
        for N = 2*[1 3 5 7 21 79] + 1
          c = trigcoeffs(f, N) 
          fN = chebfun(c, [-pi pi], 'coeffs', 'trig') 
          plot(fN, 'interval', [0 4*pi])
        end
\end{verbatim}
\bottomspace\par}

\begin{figure}
\begin{center}
\includegraphics[scale=.58]{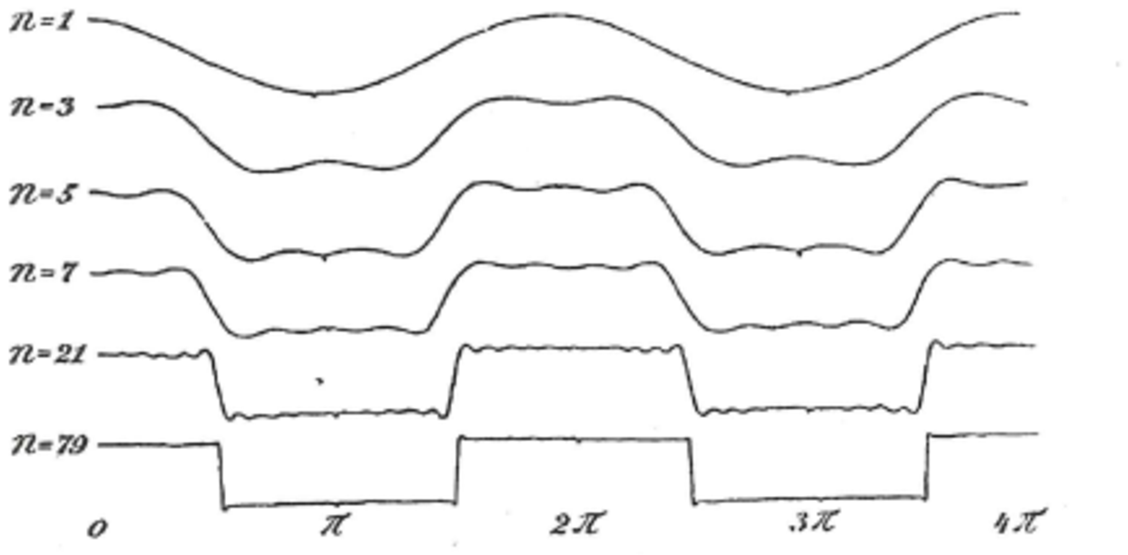}
\includegraphics[scale=.327]{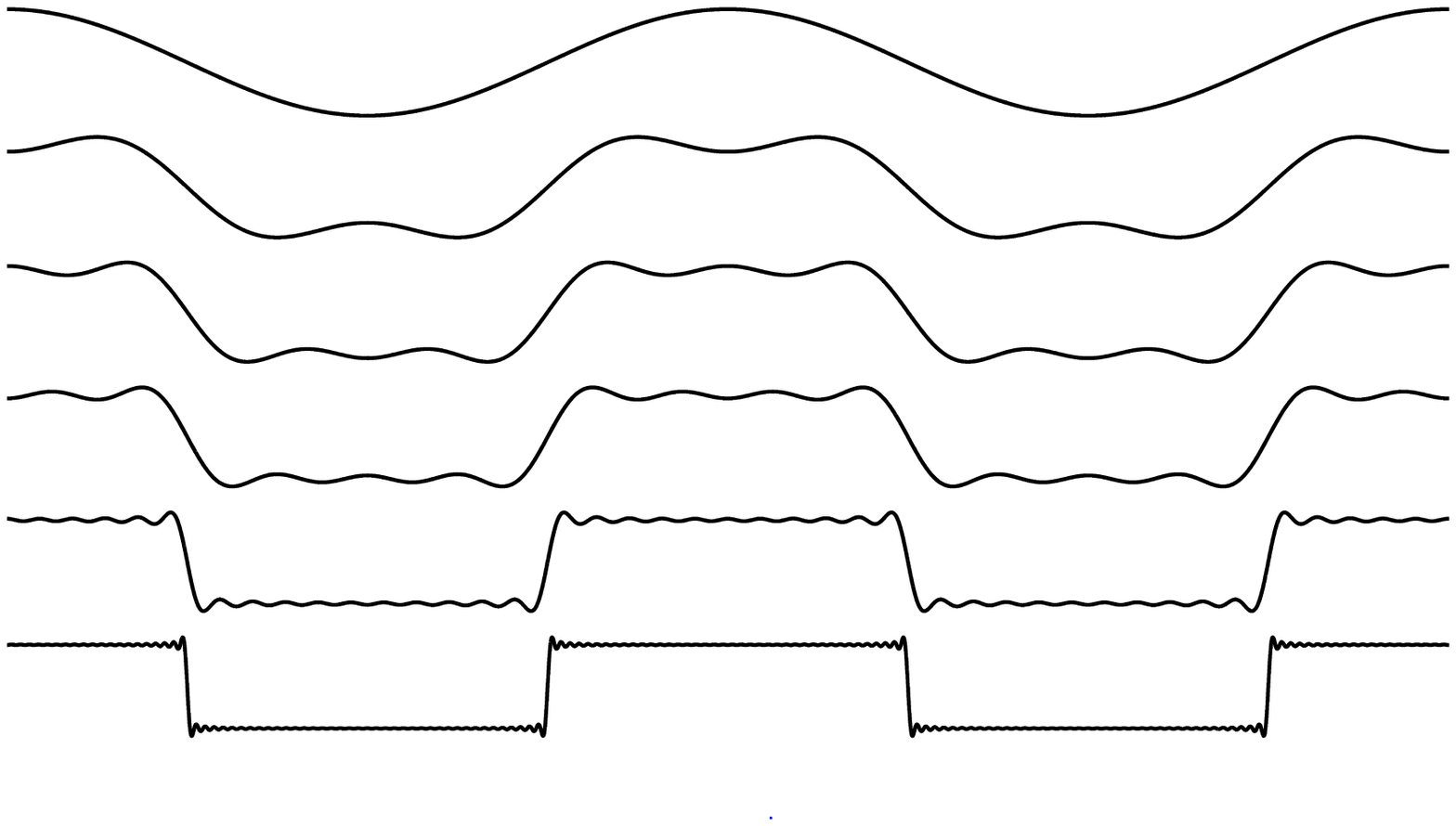}~~
\end{center}
\caption{\label{runge} On the left, a figure from
Runge's\/ $1904$ book\/
{\em Theorie und Praxis der Reihen\/}~{\rm\cite{rungebook}}.
On the right, the
equivalent computed with periodic Chebfun.  Among other things, this
figure illustrates that a trigfun can be accurately evaluated outside
its interval of definition.}
\end{figure}

{\em Interpolation in unequally spaced points.}
Very little attention has been given to trigonometric
interpolation in unequally spaced points, but the 
barycentric formula (\ref{bary1}) for odd $N$ and Henrici's
generalization for even $N$ have been generalized
to this case by Salzer and Berrut~\cite{berrut}.  Chebfun
makes these formulas available through the 
command {\tt chebfun.interp1}, just as has long been
true for interpolation by algebraic polynomials.  For
example, the code

{\small
\topspace\begin{verbatim}
        t = [-3 -2 -1 0 .5 1 1.5 2 2.5]
        p = chebfun.interp1(t, abs(t), 'trig', [-pi pi])
\end{verbatim}
\bottomspace\par}

\noindent
interpolates the function $|t|$ on $[-\pi,\pi]$
in the 9 points indicated by a trigonometric
polynomial of degree $n=4$.
The interpolant is shown 
in Figure~\ref{interpdemo} together with the analogous
curve for equispaced points.

\begin{figure}
\begin{center}
\includegraphics[scale=.6]{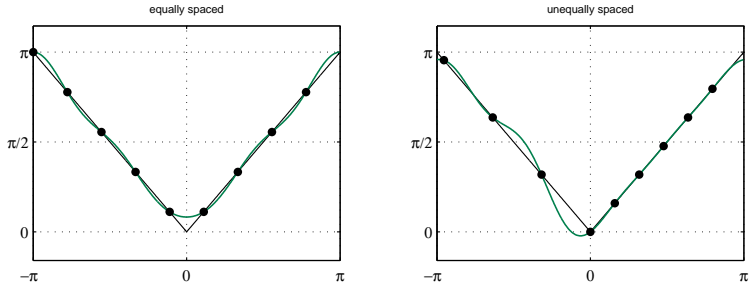}
\end{center}
\caption{\label{interpdemo} 
Trigonometric interpolation of $|t|$ in unequally spaced
points with
the generalized barycentric formula implemented in {\tt chebfun/interp1}.}
\end{figure}

{\em Best approximation, CF approximation, and rational functions.}
Chebfun has long had a dual role: it is a tool
for computing with functions, and also a tool for exploring
principles of approximation theory, including
advanced ones.  The trigonometric side of
Chebfun extends this second aspect to periodic problems.  For example, 
Chebfun's new 
{\tt trigremez} command can compute best approximants
with equioscillating error curves as described in
Theorem~\ref{thm4}~\cite{javed}.
Here is an example that generates the error curve displayed
in Figure~\ref{bestapprox}, with error $12.1095909$.

{\small
\topspace\begin{verbatim}
        f = chebfun('1./(1.01-sin(t-2))', [0 2*pi], 'trig')
        p = trigremez(f,10), plot(f-p)
\end{verbatim}
\bottomspace\par}

\noindent
Chebfun is also acquiring other capabilities for trigonometric
polynomial and rational approximation, including
Carath\'eodory--Fej\'er (CF) near-best approximation via singular
values of Hankel matrices, and these will be described elsewhere.

\begin{figure}
\begin{center}
\includegraphics[scale=.5]{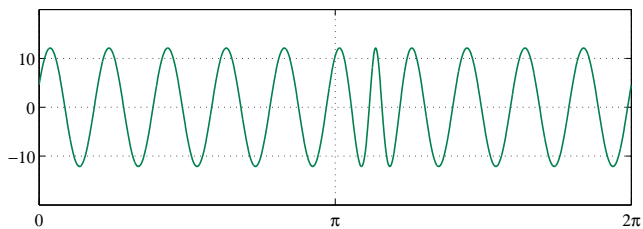}
\end{center}
\caption{\label{bestapprox} Error curve in degree\/ $n=10$ best
trigonometric approximation to
$f(t) = 1/(1.01-\sin(t-2))$ over $\pipi$.  The curve equioscillates between
$2n+2 = 22$ alternating extrema.}
\end{figure}

\section{Periodic ODEs, operator exponentials, and eigenvalue problems}
A major capability of Chebfun is
the solution of linear and nonlinear ordinary differential
equations (ODEs), as well as integral equations,
by applying the backslash command to a ``chebop'' object.
We have extended these capabilities to periodic problems, both
scalars and systems.
See~\cite{eastham}
for the theory of existence and uniqueness
of solutions to periodic ODEs, which goes back to Floquet in the 1880s,
a key point being the avoidance of nongeneric
configurations corresponding to eigenmodes.

Chebfun's algorithm for linear ODEs amounts to
an automatic spectral collocation method wrapped up so that
the user need not be aware of the discretization.  With
standard Chebfun, these are Chebyshev spectral methods, and now
with the periodic extension, they are Fourier
spectral methods~\cite{boydspec}.
The problem is solved on grids of size 32, 64, and so on until
the system judges that the Fourier coefficients have converged
down to the level of noise, and the series is then truncated
at an appropriate point.

For example, consider the problem
\begin{equation}
0.001(u'' + u') - \cos(t) u = 1, \qquad  0\le t \le 6\pi
\label{odelin}
\end{equation}
with periodic boundary conditions.
The following Chebfun code produces the solution
plotted in Figure~\ref{figodelin} in half a second
on a laptop.  Note that the trigonometric discretizations are
invoked by the flag \verb|L.bc = 'periodic'|.

{\small
\topspace\begin{verbatim}
        L = chebop(0,6*pi) 
        L.op = @(x,u) 0.001*diff(u,2) + 0.001*diff(u) - cos(x).*u 
        L.bc = 'periodic' 
        u = L\1 
\end{verbatim}
\bottomspace\par}

\begin{figure}
\begin{center}
\includegraphics[scale=.55]{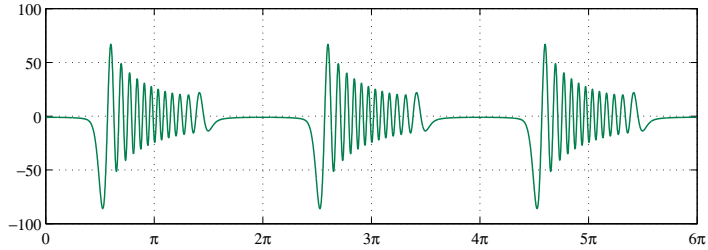}
\end{center}
\caption{\label{figodelin} Solution of the linear periodic ODE\/
$(\ref{odelin})$ as a trigfun of degree\/ $168$, computed by
an automatic Fourier spectral method.}
\end{figure}

\noindent
This trigfun is of degree 168, 
and the residual reported by {\tt norm(L*u-1)} is 
$1\times 10^{-12}$.
As always, $u$ is a chebfun; its maximum, for example, is
$\hbox{\tt max(u)} =66.928$.

For periodic nonlinear ODEs, Chebfun applies trigonometric
analogues of the algorithms developed by Driscoll and Birkisson
in the Chebshev case~\cite{bd1,bd2}.  The whole solution is carried out
by a Newton or damped Newton iteration formulated
in a continuous mode (``solve then discretize'' rather than
``discretize then solve''), with Jacobian matrices replaced
by Fr\'echet derivative operators implemented by means of automatic
differentiation and automatic spectral discretization.
For example, suppose we seek a solution of the nonlinear problem
\begin{equation}
0.004u'' + uu' - u = \cos(2\pi t), \qquad t\in [-1,1]
\label{nonlinprob}
\end{equation}
with periodic boundary conditions.  After seven Newton
steps, the Chebfun commands below
produce the result shown in Figure~\ref{fignonlin}, of
degree $n = 362$, and
the residual norm {\tt norm(N(u)-rhs,'inf')} is reported as
$8\times 10^{-9}$.

{\small
\topspace\begin{verbatim}
        N = chebop(-1,1)
        N.op = @(x,u) .004*diff(u,2) + u.*diff(u) - u
        N.bc = 'periodic'
        rhs = chebfun('cos(2*pi*t)', 'trig')
        u = N\rhs
\end{verbatim}
\bottomspace\par}

\begin{figure}
\begin{center}
\includegraphics[scale=.55]{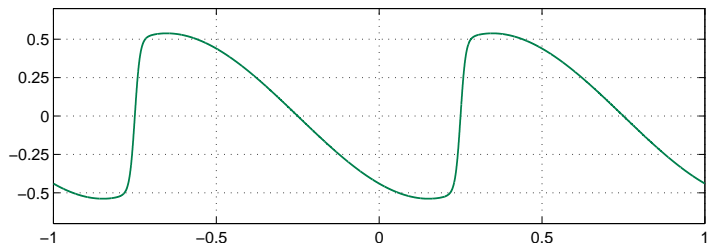}
\end{center}
\caption{\label{fignonlin} Solution of the nonlinear periodic ODE\/
$(\ref{nonlinprob})$ computed by iterating the Fourier
spectral method within a continuous form of
Newton iteration.  Executing\/
{\tt max(diff(u))} shows that the maximum of $u'$ is 
$32.094$.}
\end{figure}

Chebfun's overload of the MATLAB {\tt eigs} command solves 
linear ODE eigenvalue problems by, once again, automated spectral
collocation discretizations~\cite{driscoll}.  This too has
been extended to periodic problems, with Fourier discretizations
replacing Chebyshev.  For example, a famous periodic eigenvalue
problem is the Mathieu equation
\begin{equation}
-u'' + 2 q \cos(2t) u = \lambda u, \qquad t\in \pipi,
\label{mathieueq}
\end{equation}
where $q$ is a parameter.  The commands below give
the plot shown in Figure~\ref{mathieu}.

{\small
\topspace\begin{verbatim}
        q = 2
        L = chebop(@(x,u) -diff(u,2)+2*q*cos(2*x).*u, [0 2*pi], 'periodic')
        [V,D] = eigs(L,5), plot(V)
\end{verbatim}
\bottomspace\par}

\begin{figure}
\begin{center}
\includegraphics[scale=.55]{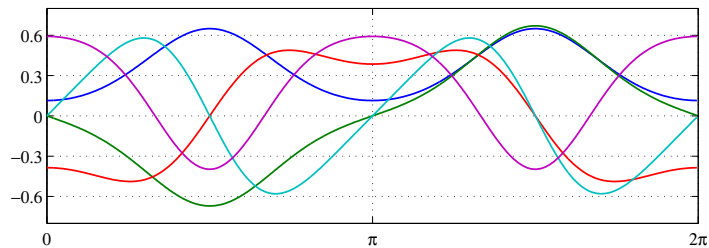}
\end{center}
\caption{\label{mathieu} First five eigenfunctions of the Mathieu
equation $(\ref{mathieueq})$ with $q=2$, computed with {\tt eigs}.}
\end{figure}

So far as we know, Chebfun is the only system offering
such convenient solution of ODEs and related
problems, now in the periodic as well as nonperiodic case.

We have also implemented a periodic analogue of Chebfun's
{\tt expm} command for computing
exponentials of linear operators, which
we omit discussing here for reasons of space.  All the capabilities
mentioned in this section can be explored with Chebgui, the graphical
user interface written by Birkisson, which now invokes
trigonometric spectral discretizations when periodic boundary
conditions are specified.

\section{Discussion}
Chebfun is an open-source project written in MATLAB and
hosted on GitHub; details and the user's guide can be found at
{\tt www.chebfun.org}~\cite{chebbook}.
About thirty people have contributed to
its development over the years, and at present there are about
ten developers based mainly at the University of Oxford.
During 2013--2014 the code was redesigned and rewritten as
version 5 (first released June 2014) in
the form of about 100,000 lines of code realizing
about 40 classes.  The aim of this redesign was to enhance Chebfun's
modularity, clarity, and extensibility, and the introduction
of periodic capabilities, which had not been planned in
advance, was the first big test of this extensibility.
We were pleased to
find that the modifications proceeded smoothly.
The central new feature is a new
class {\tt @trigtech} in parallel to the existing
{\tt @chebtech1} and {\tt @chebtech2}, which work with
polynomial interpolants in first- and second-kind Chebyshev points,
respectively.

About half the classes of Chebfun are concerned with representing
functions, and the remainder are mostly 
concerned with ODE discretization and automatic
differentiation for solution of nonlinear problems, whether
scalar or systems, possibly with nontrivial block structure.
The incorporation of periodic problems into this second, more
advanced part of Chebfun was achieved by introducing
a new class {\tt @trigcolloc} matching
{\tt @chebcolloc1} and {\tt @chebcolloc2}.

About a dozen software projects in various computer languages have been
modeled on Chebfun, and a partial
list can be found at {\tt www.chebfun.org}.\ \ One of
these, Fourfun, is a MATLAB system for periodic functions
developed independently of the
present work by Kristyn McLeod, a student of former Chebfun
developer Rodrigo Platte~\cite{mcleod}.  Another that also has
periodic and differential equations
capabilities is ApproxFun, written in Julia by Sheehan
Olver and former Chebfun developer
Alex Townsend~\cite{julia}.\footnote{Platte created
Chebfun's edge detection algorithm for fast splitting
of intervals.  Townsend extended Chebfun
to two dimensions.}  We think the enterprise of
numerical computing with functions is here to stay,
but cannot predict what systems or languages may be dominant,
say, twenty years from now.  For the moment, only Chebfun offers
the breadth of capabilities entailed in the vision of
MATLAB-like functionality for continuous functions and operators in
analogy to the long-familiar methods for discrete
vectors and matrices.

In this article we
have not discussed Chebfun computations with two-dimensional
periodic functions,
which are under development.  For example, we are
investigating capabilities for solution of time-dependent PDEs on a
periodic spatial domain and for PDEs in two space dimensions, one or
both of which are periodic.  A particularly interesting prospect is
to apply such representations to computation with functions
on disks and spheres. 

For computing with vectors and matrices, although MATLAB codes are
rarely the fastest in execution, their convenience makes them
nevertheless the best tool for many applications.
We believe that Chebfun, including now its extension to periodic problems,
plays the same role for numerical computing with functions.

\section*{Acknowledgements}
This work was carried out in collaboration with the rest of
the Chebfun team, whose names are listed at
{\tt www.chebfun.org}.\ \ Particularly active in this phase of the project have 
been Anthony Austin, \'Asgeir Birkisson,
Toby Driscoll, Nick Hale, Hrothgar (an Oxford graduate
student who has just a single name),
Alex Townsend, and Kuan Xu.
We are grateful to all of these people for their suggestions
in preparing this paper. 
The first author would like to thank the Oxford University Mathematical
Institute, and in particular the Numerical Analysis Group, for hosting
and supporting his sabbatical visit in
2014, during which this research was initiated.


\begin{thebibliography}{}

\bibitem{amvw}
J. L. Aurentz, T. Mach, R. Vandebril, and D. S. Watkins,
Fast and backward stable computation of roots of polynomials,
{\em SIAM J. Matr.\ Anal.\ Applics.,} to appear.

\bibitem{akt}
A. P. Austin, P. Kravanja and L. N. Trefethen,
Numerical algorithms based on analytic function values
at roots of unity, {\em SIAM J. Numer.\ Anal.} 52 (2014),
1795--1821.

\bibitem{AX}
A. P. Austin and K. Xu,
On the numerical stability of the second-kind barycentric
formula for trigonometric inteprolation in equispaced points,
submitted, 2015.

\bibitem{battlestref}
Z. Battles and L. N. Trefethen,
An extension of MATLAB to continuous functions and operators,
{\em SIAM J. Sci.\ Comp.} 25 (2004), 1743--1770.

\bibitem{berrut}
J.-P. Berrut, Baryzentrische Formeln zur trigonometrischen
Interpolation (I), {\em J. Appl.\ Math.\ Phys.} 35
(1984), 91--105.

\bibitem{bt}
J.-P. Berrut and L. N. Trefethen,
Barycentric Lagrange interpolation,
{\em SIAM Rev.} 46 (2004), 501--517.

\bibitem{bd1}
A. Birkisson and T. A. Driscoll,
Automatic Fr\'echet differentiation for
the numerical solution of boundary-value problems,
{\em ACM Trans.\ Math.\ Softw.} 38 (2012), 1--26.

\bibitem{bd2}
A. Birkisson and T. A. Driscoll,
Automatic linearity dection, preprint, {\tt eprints.maths.ox.ac.uk}, 2013.

\bibitem{boydspec}
J. P. Boyd, {\em Chebyshev and Fourier Spectral Methods,}
2nd ed., Dover, 2001.

\bibitem{davis}
P. J. Davis, On the numerical integration of periodic
analytic functions, in R. E. Langer, ed., {\em On Numerical Integration,}
Math.\ Res.\ Ctr., U. of Wisconsin, 1959, pp. 45--59.

\bibitem{driscoll}
T. A. Driscoll, Automatic spectral collocation for
integral, integro-differential, and integrally reformulated differential
equations, {\em J. Comput.\ Phys.} 229 (2010), 5980--5998.

\bibitem{chebbook}
T. A. Driscoll, N. Hale, and L. N. Trefethen,
{\em Chebfun Guide,} Pafnuty Publications, Oxford, 2014.
Most recent version freely available at {\tt www.chebfun.org.}

\bibitem{eastham}
M. S. P. Eastham, {\em The Spectral Theory of Periodic
Differential Equations,} Scottish Academic Press, Edinburgh, 1973.

\bibitem{faber}
G. Faber, \"Uber stetige Funktionen,
{\em Math.\ Ann.} 69 (1910), 372--443.

\bibitem{gauss}
C. F. Gauss, Theoria interpolationis methodo nova tractata,
{\em Werke, } v.~3, K\"onigl.\ Ges.\ G\"ott., 1866, pp.~265--327.

\bibitem{henrici}
P. Henrici, Barycentric formulas for interpolating
trigonometric polynomials and their conjugates,
{\em Numer.\ Math.,} 33 (1979), 225--234.

\bibitem{javed}
M. Javed and L. N. Trefethen, 
The Remez algorithm for trigonometric approximation
of periodic functions, submitted, 2015.

\bibitem{katznelson}
Y. Katznelson, {\em An Introduction to Harmonic Analysis,}
Dover, 1968.

\bibitem{kress}
R. Kress, Ein ableitungsfreies Restglied
f\"ur die trigonometrische Interpolation
periodischer analytischer Funktionen,
{\em Numer.\ Math.} 16 (1971), 389--396.

\bibitem{mcleod}
K. McLeod, Fourfun: A new system for automatic computations
using
Fourier expansions, manuscript, 2014.

\bibitem{meinardus}
G. Meinardus, {\em Approximation of Functions:
Theory and Numerical Methods,} Springer, 1967.

\bibitem{julia}
S. Olver and A. Townsend, A practical framework
for infinite-dimensional linear algebra,
arXiv:1409.5529, 2014.

\bibitem{ptk}
R. B. Platte, L. N. Trefethen, and A. B. J. Kuijlaars,
Impossibility of fast stable approximation of
analytic functions from equispaced samples,
{\em SIAM Rev.} 53 (2011), 308--318.

\bibitem{runge}
C. Runge, \"Uber empirische Funktionen und die
Interpolation zwischen \"aquisitanten Ordinaten, {\em Z. Math.\ Phys.}
46 (1901), 224--243.

\bibitem{rungebook}
C. D. T. Runge, {\em Theorie und Praxis der Reihen,}
Sammlung Schebert, 1904, reprinted by VKM Verlag, Saarbr\"ucken,
2007.

\bibitem{salzer}
H. E. Salzer, Coefficients for facilitating trigonometric
interpolation, {\em J. Math.\ Phys.} 27 (1948), 274--278.

\bibitem{cacm}
L. N. Trefethen, Numerical computation with functions instead
of numbers, {\em Commun.\ ACM,} to appear.

\bibitem{atap}
L. N. Trefethen, {\em Approximation Theory
and Approximation Practice,} SIAM, 2013.

\bibitem{tw}
L. N. Trefethen and J. A. C. Weideman, The exponentially
convergent trapezoidal rule, {\em SIAM Rev.} 56 (2014),
385--458.

\bibitem{zygmund}
A. Zygmund, {\em Trigonometric Series,} Cambridge U. Press, 1959.

\end{thebibliography}
\end{document}